\newtheorem{theorem}{Theorem}[section]
\theoremstyle{definition}
\theoremstyle{remark}
\numberwithin{equation}{section}
\begin{document}
{\setlength{\baselineskip}%
                        {1.3\baselineskip}
                      
\title{A note on the Paper ``Homomorphisms with respect to a function"}
\author{Z. Ercan}
\address{Department of Mathematics, Abant Izzet Baysal University, Golkoy Kampusu, Bolu}
\email{zercan@ibu.edu.tr}

\subjclass[2000]{54C30, 46E05, 46E25}

\keywords{Continuous function, Evaluation, Homomorphism, Realcompact}
\maketitle 
\begin{abstract} We give a direct proof of the main result of the paper ``Homomorphisms with respect to a function" by K. Boulabiar and F. Gdara \cite{bg} without using the Axiom of Choice.
\end{abstract}
\section{The Theorem}
\noindent For a topological space $X$, the lattice-ordered ring (under the pointwise algebraic operations and pointwise ordering) of all real-valued continuous functions on $X$ is denoted by $C(X)$. Recall that  a completely regular Hausdorff space $X$ is called {\it realcompact} if it is homeomorphic to a closed subspace of the product space of the reals. For details about the  lattice ordered group $C(X)$ and the notion of realcompactness, we refer to \cite{gj}.

For $r\in\mathbb{R}$, the map ${\bf r}:\mathbb{R}\rightarrow\mathbb{R}$ is defined by ${\bf r}(x):=r$. For a nonempty index set $I$ and $j\in I$, the mapping $P_{j}:\prod_{i\in I}\mathbb{R}\rightarrow\mathbb{R}$ is defined by $$P_{j}((x_{i})_{i\in I}):=x_{j},$$ and $e_{j}:=(x_{i})_{i\in I}$ with $x_{j}=1$ and $x_{i}=0$ for $i\not =j$. We note that $P_{j}\in C(\prod_{i\in I}\mathbb{R})$.

For a topological space $X$, $H:C(X)\rightarrow\mathbb{R}$ denotes a positive group homomorphism with 
$H({\bf 1})=1$. We note that, in this case, $H$ is linear. For a ${\varphi}\in C(\mathbb{R})$, the map $H$ is called a {\it ${\varphi}$-homomorphism} if  
 $$H\circ {\varphi}={\varphi}\circ H,$$ that is, for every $f\in C(X)$, we have
$$H({\varphi}\circ f)={\varphi}(H(f)).$$ We say that $H$ is {\it point evaluated} if there exists $k\in X$ such that
$$H(f)=f(k)$$ for all $f\in C(X)$. 

The following is the main result of \cite{bg}.
\begin{theorem}\label{teo:1} Let $X$ be a realcompact space. The following are equivalent:
\begin{enumerate}
\item[\textnormal{(i)}] $H$ is point  evaluated at some point of $X$.
\item[\textnormal{(ii)}] $H$ is a $\varphi$-homomorphism for all ${\varphi}\in C(\mathbb{R})$ with ${\varphi}(r) > {\varphi}(0)$ for all $r\in\mathbb{R}\setminus\{0\}$.
\item[\textnormal{(iii)}] There exists ${\varphi}\in C(R)$ with ${\varphi}(r) > {\varphi}(0)$ for all $r\in\mathbb{R}\setminus\{0\}$ such that H is a
$\varphi$-homomorphism.
\end{enumerate}
\end{theorem}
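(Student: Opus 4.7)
The implications $(i) \Rightarrow (ii) \Rightarrow (iii)$ are immediate: if $H(f) = f(k)$ then $H(\varphi\circ f) = \varphi(f(k)) = \varphi(H(f))$ for every continuous $\varphi$, and the witness $\varphi(r) = r^{2}$ gives $(iii)$ from $(ii)$. The nontrivial direction, and the only place where realcompactness is used, is $(iii) \Rightarrow (i)$.

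My plan is to realize $X$ as a closed subspace of the Tychonoff power $\mathbb{R}^{C(X)}$ via the evaluation map $x \mapsto (f(x))_{f \in C(X)}$; this is the form of realcompactness I will invoke. Setting $k := (H(f))_{f \in C(X)}$, the key observation is that once $k$ is shown to lie in $X$, its $f$-th coordinate (as a point of $X$) equals $f(k)$, so $H(f) = k_{f} = f(k)$ for every $f \in C(X)$. The whole argument therefore reduces to proving $k \in X$.

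Suppose for contradiction that $k \notin X$. Closedness of $X$ yields a basic open neighbourhood of $k$ disjoint from $X$: after rescaling the finitely many separating coordinates there exist $f_{1}, \dots, f_{n} \in C(X)$ and $\varepsilon > 0$ with $\max_{i} |f_{i}(x) - H(f_{i})| \geq \varepsilon$ for every $x \in X$. Setting $\psi := \varphi - \varphi(0)\mathbf{1}$, which lies in $C(\mathbb{R})$, is nonnegative, and vanishes only at $0$, the $\varphi$-homomorphism property gives $H(\psi \circ g) = 0$ whenever $H(g) = 0$. Summing $\psi \circ (f_{i} - H(f_{i})\mathbf{1})$ over $i$ produces a nonnegative $\Psi \in C(X)$ with $H(\Psi) = 0$ and $\Psi > 0$ throughout $X$.

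The main obstacle is to promote this pointwise strict inequality to a uniform lower bound $\Psi \geq c\mathbf{1}$ for some $c > 0$, since the latter would immediately contradict $H(\Psi) = 0$ together with $H(\mathbf{1}) = 1$ and positivity of $H$. The naive estimate $\Psi(x) \geq \min_{i} \inf_{|r| \geq \varepsilon} \psi(r)$ can collapse to $0$ (take for instance $\varphi(r) = r^{2}e^{-r^{2}}$), so more care is required. The two routes I would try are: (a) replace each separating $f_{i}$ by a bounded continuous transform having the same value under $H$ and an analogous separation property, so that the arguments of $\psi$ are confined to a compact subset of $\mathbb{R}$ on which the continuous positive $\psi$ attains a positive infimum; or (b) exploit the one-parameter family of identities $H(\varphi(f + c\mathbf{1})) = \varphi(H(f) + c)$ in $c \in \mathbb{R}$ to establish $H(f^{2}) = H(f)^{2}$, whence $H$ is a ring homomorphism and $(i)$ follows from the classical result that on a realcompact space every unital ring homomorphism $C(X)\to\mathbb{R}$ is point evaluation. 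Carrying out either route without invoking Zorn's Lemma or ultrafilters is the heart of the note.
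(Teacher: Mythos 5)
Your setup is sound and in fact close in spirit to the paper's own argument: the embedding $x\mapsto (f(x))_{f\in C(X)}$, the candidate point $k=(H(f))_{f}$, the reduction to showing $k\in X$, and the auxiliary function $\Psi=\sum_i \psi\circ(f_i-H(f_i)\mathbf{1})$ with $H(\Psi)=0$ and $\Psi>0$ on $X$ all have counterparts there. But your proof stops exactly where the work begins: you name the obstacle (no uniform lower bound $\Psi\ge c\,\mathbf{1}$) and then offer two unexecuted ``routes,'' so as written this is a gap, not a proof. Neither route is straightforward. Route (a) founders because you only know how $H$ interacts with the one given $\varphi$; you cannot replace $f_i$ by $\arctan\circ f_i$ or by a truncation and still control the value of $H$ on the result, since $H$ is not assumed to be a lattice or ring homomorphism. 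Route (b) works verbatim only for $\varphi(r)=r^2$, where expanding $H(\varphi(f+c\mathbf{1}))=\varphi(H(f)+c)$ in $c$ does give $H(f^2)=H(f)^2$; for a general $\varphi$ with a strict minimum at $0$ there is no evident way to extract multiplicativity, and in any case appealing to ``the classical result'' that unital ring homomorphisms on $C(X)$ of a realcompact $X$ are evaluations would reintroduce exactly the Stone--\v{C}ech/ultrafilter machinery the note is trying to avoid.

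The missing idea is that one should not try to contradict $k\notin X$ by bounding $\Psi$ below; the actual proof sidesteps the uniform lower bound entirely. Working first on the full product $\prod_{i\in I}\mathbb{R}$ with $c_i:=H(P_i)$, one shows that $H$ kills every $f\ge 0$ vanishing on a basic neighborhood $V=\prod_i U_i$ of $c$ (with $F=\{i:U_i\ne\mathbb{R}\}$ finite): put $h:=\sum_{i\in F}\varphi\circ(P_i-c_i\mathbf{1})$, so that $H(h)=0$ and $h>0$ off $V$; then $g:=(f/h)\,\chi_{X\setminus V}$ is continuous, $f=gh$, and multiplying $0\le g-g\wedge \mathbf{n}\le \tfrac1n g^2$ by $h$ yields $0\le H(f)\le H(gh\wedge nh)+\tfrac1n H(g^2h)\le nH(h)+\tfrac1n H(g^2h)\to 0$. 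No positive lower bound on $h$ is ever needed. A second truncation step (with $U_n=f^{-1}(-\tfrac1n,\tfrac1n)$, the function $f-f\wedge\tfrac1n$ vanishes on $U_n$) upgrades this to ``$f(c)=0$ implies $H(f)=0$,'' hence $H(f)=f(c)$ for all $f$ on the product; only then does one pass to the closed subspace $X$, where $c\notin X$ is excluded by a single completely regular separation ($f(c)=1$, $f|_X=0$ forces $1=0$). I would reorganize your argument along these lines rather than trying to force $\Psi\ge c\,\mathbf{1}$.
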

\section{Proof Of Theorem \ref{teo:1}}
\noindent In \cite{bg}, using the notions of Stone-\v{C}ech compactification and Stone-extensions, the proof of the above theorem is given, whereby the Axiom of choice has implicitly been used. Following \cite{eo}, we can give a direct proof of the above theorem without using the Axiom of Choice.

\begin{proof}[Proof of Theorem \ref{teo:1}.] In the proof we mainly follow the arguments of \cite{eo}, so to be as self contained as possible, we repeat some parts of the proof therein.  As indicated in \cite{bg}, the implications $\text{(i)}\Longrightarrow \text{(ii)}$ and $\text{(ii)}\Longrightarrow \text{(iii)}$ are obvious. To see $\text{(iii)}\Longrightarrow \text{(i)}$, suppose that for a ${\varphi}\in C(\mathbb{R})$, the map $H$ is a  $\varphi$-homomorphism. Without loss of generality, we can suppose that ${\varphi}(0)=0$. There are two cases:

{\it Case 1}. $X=\prod_{i\in I}\mathbb{R}$.

Let $c:=(c_{i}):= ({\varphi}(P_{i}))$. We show that for each $0\leq f\in C(X)$, one has 
${\varphi}(f) = f(c)$.
For each $f\in C(X)$, let
$k_{f} : X\rightarrow \mathbb{R}$ be defined by $k_{f}:=f-f(c)\, {\bf 1}$.
Then ${\varphi}(k_{f}) = {\varphi}(f)-f(c)$ and
$H(k_{f}) = 0 $ if and only if $H(f) = f(c)$.
Hence it is enough to show that
$f(c) = 0$ implies $H(f) = 0$. Let $0\leq f\in C(X)$ be given. 

{\it Claim $(\dagger)$}: if $f|_{U}=0$  for some open set $U$ which contains $c$, then $H(f) = 0$.
There exists a family $(U_{i})_{i\in I}$ of open subsets of of $\mathbb{R}$ such that
$$c\in \prod_{i\in I}U_{i} = V\subset U\quad\mbox{and}\quad F = \{i\in I : U_{i}\not =\mathbb{R}\}\quad\mbox{is finite}.$$
Let $h:X\rightarrow\mathbb{R}$ be defined by 
$$h:=\sum_{i\in F}{\varphi}\circ (P_{i}-{\bf c_{i}}).$$ Note that
$$H(h)=0.$$
It is clear that $h(x)\not =0$ whenever $x\not\in V$. Define $g:X\rightarrow\mathbb{R}$ by 
$$g(x):={{f(x)}\over {h(x)}}{\chi}_{X\setminus V}.$$ Then $g$ is continuous and $f=gh$. For each $n\in\mathbb{N}$, since
$$0\leq g-g\wedge {\bf n}\leq {1\over n}\, g^{2},$$ we have 
$$0\leq gh-(g\wedge {\bf n})h=gh-(gh)\wedge {\bf n}h\leq {1\over n}\, g^{2}h.$$ Notice also that for each $n\in\mathbb{N}$, since $H$ is positive, we have
$$H((g\wedge {\bf n})h)=H(gh\wedge nh)\leq H(nh)=nH(h)=0.$$ This implies that
$$0\leq H(f)=H(fg)=H(fg-gh\wedge nh)\leq {1\over n}(g^{2}h)$$ for each $n$, whence $H(f)=0$. 

{\it Claim $(\ddagger)$}. if $f(c)=0$, then $H(f)=0$. Suppose that it is not. Then we may suppose that $H(f)=1$ and $f(c)=0$. For each $n\in\mathbb{N}$, let 
$$U_{n}=f^{-1}(-{1\over n},{1\over n}).$$ Then $c\in U_{n}$ and 
$$(f-f\wedge {1\over n})|_{U_{n}}=0.$$ By Claim $(\ddagger)$, for all $n$, we have 
$$1=H(f)=H(f\wedge {1\over n})\leq H({1\over n})={1\over n}\rightarrow 0,$$ which is a contradiction. Clearly this shows that $H(f) = 0$ whenever $f(c) =0$.

{\it Case 2}. $X$ is homeomorphic to a closed subset $Y=\prod_{i\in I}\mathbb{R}$.

Let ${\pi}:C(Y)\rightarrow C(X)$ be defined by 
$${\pi}(f):=f|_{X},$$ where $f|_{X}$ denotes the restriction of $f$ to $X$. Then there exists $c\in Y$ such that
$H\circ {\pi}(f) = f(c)$ for each $f\in C(Y)$. Suppose that $c\not\in X$. As $Y$ is a completely regular Hausdorff space, there exists
$f\in  C(Y)$ such that
$f(c) = 1$ and $f|_{X}=0$. This implies that
$$1=f(c) =P\circ {\pi}(f) = P(f|X) = 0,$$
which is a contradiction. Hence $H(f)=f(c)$ for all $0\leq f\in C(X)$. For arbitrary $f\in C(X)$, we then have 
$$H(f)=H(f^{+}-f^{-})=H(f^{+})-H(f^{-})=f^{+}(c)-f^{-}(c)=f(c).$$ This completes the proof. 
\end{proof}
In \cite{e} it is noticed that if $X$ is a completely regular Hausdorff space then a nonzero linear map,  $H:C(X)\rightarrow \mathbb{R}$ with $H({\bf 1})=1$, is a ring homomorphism if and only if there exists a net $(x_{\alpha})$ in $Y$ such that 
$$H(f)=\lim f(x_{\alpha})$$ for all $f\in C(X)$. By combining this with the above Theorem immediately we get the following theorem. 
\begin{theorem} Let $X$ be a completely regular Hausdorff space and $H:C(X)\rightarrow\mathbb{R}$ be a positive group homomorphism with $H({\bf 1})=1$. Then the following are equivalent.
\\
i.) $H$ is a Ring homomorphism.
\\
ii.) $H$ is a $\varphi$-homomorphism for all ${\varphi}\in C(\mathbb{R})$ satisfying ${\varphi}(x)>{\varphi}(0)$ for all $x\not =0$.
\\
iii.) $H$ is a Riesz homomorphism.
\end{theorem}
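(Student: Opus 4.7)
The plan is to establish the equivalences by running the cycle (i) $\Rightarrow$ (ii) $\Rightarrow$ (iii) $\Rightarrow$ (i), using as the principal tool the net representation of ring homomorphisms recalled above from \cite{e}: a nonzero linear $H\colon C(X)\to\mathbb{R}$ with $H(\mathbf{1})=1$ is a ring homomorphism exactly when there is a net $(x_\alpha)$ in $X$ with $H(f)=\lim f(x_\alpha)$ for every $f\in C(X)$.

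For (i) $\Rightarrow$ (ii), if $H$ is a ring homomorphism I would invoke \cite{e} to write $H(f)=\lim f(x_\alpha)$, and then for any $\varphi\in C(\mathbb{R})$ continuity of $\varphi$ gives $H(\varphi\circ f)=\lim \varphi(f(x_\alpha))=\varphi(H(f))$, which is in fact stronger than (ii) asks for. For (ii) $\Rightarrow$ (iii), I would specialise (ii) to $\varphi(x)=|x|$, which lies in $C(\mathbb{R})$ and satisfies $\varphi(x)>\varphi(0)=0$ for every $x\neq 0$; the resulting identity $H(|f|)=|H(f)|$, together with linearity, is the definition of a Riesz homomorphism.

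The bulk of the work is (iii) $\Rightarrow$ (i), where instead of attempting to prove multiplicativity directly I would build a net representation for $H$ in $X$ and then invoke \cite{e}. Given $f_1,\dots,f_n\in C(X)$ and $\varepsilon>0$, consider the open set
\begin{equation*}
V(f_1,\dots,f_n;\varepsilon):=\{x\in X:|f_i(x)-H(f_i)|<\varepsilon\text{ for each }i\}.
\end{equation*}
If $V$ were empty then $\bigvee_{i=1}^{n}|f_i-H(f_i)\mathbf{1}|\geq\varepsilon\,\mathbf{1}$ pointwise on $X$, so applying $H$, which as a Riesz homomorphism commutes with finite suprema and with $|\cdot|$, would yield
\begin{equation*}
\varepsilon \le H\Bigl(\bigvee_{i=1}^{n}|f_i-H(f_i)\mathbf{1}|\Bigr)=\bigvee_{i=1}^{n}\bigl|H(f_i)-H(f_i)\bigr|=0,
\end{equation*}
a contradiction. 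Choosing $x_V\in V$ for each such $V$ and directing the index set by reverse inclusion then gives a net in $X$ with $H(f)=\lim f(x_V)$ for every $f\in C(X)$, and the implication from \cite{e} delivers (i).

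The main obstacle I anticipate is identifying the cleanest way to close the cycle: a head-on proof that a Riesz homomorphism on $C(X)$ is multiplicative has to accommodate unbounded $f$ and requires truncation arguments in the spirit of Claim $(\dagger)$ of Theorem \ref{teo:1}, whereas the detour through the net representation of \cite{e} sidesteps those technicalities, using only the two lattice identities $H(\bigvee g_i)=\bigvee H(g_i)$ and $H(|g|)=|H(g)|$ together with positivity.
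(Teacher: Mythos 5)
Your proof is correct, and it closes the cycle by a route that is genuinely different from the paper's for the decisive implication. The easy steps check out: (i) $\Rightarrow$ (ii) follows from the net representation of \cite{e} together with continuity of $\varphi$ (and indeed gives the conclusion for every $\varphi\in C(\mathbb{R})$), and (ii) $\Rightarrow$ (iii) via $\varphi=|\cdot|$ is exactly right, since $|x|>|0|$ for $x\neq 0$ and a positive linear map with $H(|f|)=|H(f)|$ is a Riesz homomorphism. For (iii) $\Rightarrow$ (i), the paper's one-sentence argument is to combine \cite{e} with Theorem \ref{teo:1}: since $C(X)$ is naturally isomorphic to $C(\upsilon X)$ for the Hewitt realcompactification $\upsilon X$, and a Riesz homomorphism is in particular a $|\cdot|$-homomorphism, Theorem \ref{teo:1} makes $H$ a point evaluation on the realcompact space $\upsilon X$, hence a ring homomorphism, and then \cite{e} supplies the net back in $X$. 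You instead bypass Theorem \ref{teo:1} and the realcompactification altogether and manufacture the approximating net inside $X$ directly: your nonemptiness argument for $V(f_1,\dots,f_n;\varepsilon)$ is sound, because a Riesz homomorphism does commute with finite suprema and with $|\cdot|$, so positivity and $H(\mathbf{1})=1$ turn the pointwise bound $\bigvee_{i}|f_i-H(f_i)\mathbf{1}|\geq\varepsilon\mathbf{1}$ into the contradiction $\varepsilon\leq 0$, and the family of such sets is directed downward by inclusion. What your approach buys is a self-contained elementary argument that needs only the trivial direction of \cite{e} at this step (a pointwise limit of evaluations is multiplicative); what it gives up is the reuse of the machinery the paper has just established, which is the whole point of the paper's remark. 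One small caveat: selecting $x_V\in V$ for every $V$ invokes a choice function, but the cited net characterization does too, and the paper claims choice-freeness only for Theorem \ref{teo:1}, not here.
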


\end{document}